
\documentclass[letterpaper, 10 pt, conference]{ieeeconf}  

\IEEEoverridecommandlockouts                              

\overrideIEEEmargins                                      




\usepackage{graphicx}
\usepackage[dvipsnames]{xcolor}
\graphicspath{{./figures/}}

\usepackage{amsmath,amssymb,amsfonts,mathtools}

\usepackage{mathrsfs}
\usepackage{amsfonts,bbm}
\usepackage{dsfont}
\usepackage{epstopdf}
\usepackage{subfigure}
\usepackage[acronym]{glossaries}
\usepackage{balance}
\usepackage{url}
\usepackage{placeins}
\usepackage{textgreek} 
\usepackage{tikz}
\usepackage{pgfplots}
\pgfplotsset{compat=1.10}
\usepgfplotslibrary{fillbetween}

\newtheorem{theorem}{Theorem}
\newtheorem{definition}{Definition}

\newtheorem{lemma}{Lemma}

\newtheorem{remark}{Remark}
\newtheorem{assumption}{Assumption}
\newtheorem{standing}{Standing Assumption}



\newcommand{\R}{\mathbb{R}}

\newcommand{\N}{\mathbb{N}}

\newcommand{\mc}[1]{\mathcal{#1}}

\newcommand{\argmin}{\mathrm{argmin}}

\newcommand{\proj}{\mathrm{proj}}
\newcommand{\diag}{\mathrm{diag}}

\newcommand{\col}{\textrm{col}}

\newcommand{\bs}{\boldsymbol}

\newcommand\oprocendsymbol{\hbox{$\square$}}
\newcommand\oprocend{\relax\ifmmode\else\unskip\hfill\fi\oprocendsymbol}


\newcommand{\nc}{\mathrm{N}}
\newcommand{\0}{\mathbf{0}}
\newcommand{\1}{\mathbf{1}}


\newcommand{\Rmnum}[1]{\expandafter\@slowromancap\romannumeral #1@}
\makeatletter

\makeglossaries
\newacronym{GNEP}{GNEP}{generalized Nash equilibrium problem}
\newacronym{NE}{NE}{Nash equilibrium}
\newacronym{NEP}{NEP}{Nash equilibrium problem}
\newacronym{GNE}{GNE}{generalized Nash equilibrium}
\newacronym{v-GNE}{v-GNE}{variational GNE}
\newacronym{ISS}{ISS}{Input-to-state-stable}
\newacronym{KKT}{KKT}{Karush-Kuhn-Tucker}

\usepackage{algorithm}
\usepackage{algorithmicx}
\makeatletter
\renewcommand{\ALG@name}{Algorithm}
\makeatother
\usepackage{empheq}
\setlength{\intextsep}{0pt}
\setlength{\belowcaptionskip}{-10pt}

\usepackage{enumitem}

\def\QEDhereeqn{\eqno\let\eqno\relax\let\leqno\relax\let\veqno\relax\hbox{\QED}}
\def\QEDopenhereeqn{\eqno\let\eqno\relax\let\leqno\relax\let\veqno\relax\hbox{\QEDopen}}

\title{\LARGE \bf
A continuous-time distributed generalized Nash equilibrium seeking algorithm  over networks for double-integrator agents
}

\author{Mattia Bianchi and Sergio Grammatico
\thanks{The authors are with the Delft Center for Systems and Control (DCSC), TU Delft, The Netherlands
       E-mail addresses: \texttt{\{m.bianchi, s.grammatico\}@tudelft.nl}. This work was partially supported by NWO under research project OMEGA (grant n. 613.001.702) and by the ERC under research project COSMOS (802348).} 
}

\begin{document}

\maketitle
\thispagestyle{empty}
\pagestyle{empty}

\begin{abstract}
	We consider
	a system of single- or double-integrator agents playing a generalized Nash game over a network, in a partial-information scenario.
	We address the generalized Nash equilibrium seeking problem by  designing a fully-distributed dynamic controller, based on continuous-time consensus and primal-dual gradient dynamics.
	Our main technical contribution is to show convergence of the closed-loop system to a variational equilibrium, under strong monotonicity and Lipschitz continuity of the game mapping, by leveraging monotonicity properties and stability theory for projected dynamical systems.
\end{abstract}

\section{Introduction}\label{sec:introduction}
 \Gls{GNE} problems arise in several network systems,  where multiple selfish decision-makers, or agents, aim at optimizing their individual, yet inter-dependent, objective functions, subject to shared constraints. Engineering applications include demand-side management in the smart grid \cite{Saad2012}, charging/discharging of electric vehicles \cite{Grammatico2017},  
formation control \cite{Stankovic_Johansson_Stipanovic_2012} and
communication networks \cite{Palomar_Eldar_Facchinei_Pang_2009}.
From a game-theoretic perspective, the aim is to design distributed \gls{GNE} seeking algorithms, using the local information available to each agent.
%
%
Moreover, in the cyber-physical systems framework, games are often played by agents with their own dynamics \cite{DePersisMonshizadeh_2019},  \cite{Stankovic_Johansson_Stipanovic_2012},
and controllers have to be conceived to steer the physical process to a Nash equilibrium, while ensuring closed-loop stability. 
This stimulates the development of  continuous-time schemes \cite{GadjovPavel2018}, \cite{DePersisGrammatico_TAC2020}, for which control-theoretic properties are more easily unraveled.

\emph{Literature review:} 
A variety of different methods have been proposed to seek \gls{GNE} in a distributed way \cite{YiPavel2019}, \cite{Yu_VanderSchaar_Sayed_2017}, \cite{BelgioiosoGrammatico_ECC_2018}.
These works refers to a full-information setting, where each agent can access the decision of all other agents, for example if a coordinator broadcasts the data to the network.
Nevertheless, there are applications where the existence of a central node must be excluded and each agent only relies on the information exchanged over a network, via peer-to-peer communication. 
To deal with this partial-information scenario, payoff-based algorithms for \gls{NE} seeking have been studied,  \cite{FrihaufKrsticBasar_2012},  \cite{Stankovic_Johansson_Stipanovic_2012}. 
In this paper, we are instead interested in a different,
model-based, approach. We assume that the agents agree on sharing their strategies with their neighbors;  each agent keeps an estimate of other agents' actions and asymptotically reconstructs the true values, exploiting the information exchanged over the network. 
This solution has been examined extensively for games without coupling constrains, both in discrete time \cite{SalehisadaghianiWeiPavel2019},  \cite{Koshal_Nedic_Shanbag_2016}, and continuous-time  \cite{GadjovPavel2018},  \cite{DePersisGrammatico2018}.
However, fewer works deal with generalized games. Remarkably, Pavel in \cite{Pavel2018} derived a single-timescale, fixed step sizes \gls{GNE} learning algorithm, by leveraging an elegant operator splitting approach. 
The authors in \cite{DengNian2019} proposed a continuous-time design for aggregative games with equality constraints.
All the results mentioned above consider single-integrator agents only. 
Distributively driving  a network of more complex physical systems to game theoretic solutions is still a relatively unexplored problem. 
With regard to aggregative games, a proportional integral feedback algorithm was developed in \cite{DePersisMonshizadeh_2019} to seek a \gls{NE} in  networks of passive nonlinear second-order systems. In  \cite{Zhang_et_al_NL_NE_2019}, continuous-time gradient-based controllers were introduced, for some classes of nonlinear systems with uncertainties. The authors of \cite{Stankovic_Johansson_Stipanovic_2012} considered generally coupled costs games played by linear time-invariant agents, via a discrete-time extremum seeking approach. \gls{NE} problems arising in systems of multi-integrator agents, in the presence of deterministic disturbances, were addressed in \cite{RomanoPavel2019}.
In all the references cited, the assumption is made of unconstrained action sets and absence of coupling constraints.
%
%
\newline \indent
\emph{Contribution:} Motivated by the above, in this paper we investigate continuous-time \gls{GNE} seeking for networks of single- or double-integrator agents. We consider games with affine coupling constraints, played under partial-decision information. Specifically:
\begin{itemize}
	[leftmargin=*,topsep=0pt]
 \item We introduce a primal-dual projected-gradient controller for single-integrator agents, which is a continuous-time version of the one proposed in \cite{Pavel2018}.
We show convergence of both primal and dual variables, under strong monotonicity and Lipschitz continuity of the game mapping, 
We are not aware of other continuous-time \gls{GNE} seeking algorithms for games with generally coupled costs, whose convergence is guaranteed under such mild assumptions.
With respect to the setup (for aggregative game only) in \cite{DengNian2019}, we can also handle inequality constraints.
\item We show how our controller can be adapted to learning \gls{GNE} in games with shared constraints, played by double-integrator agents. To the best of our knowledge,
this is the first equilibrium-seeking algorithm for \emph{generalized} games where the agents have second-order dynamics.
\end{itemize}
%

\smallskip
\emph{Basic notation:}
$\R$ ($\R_{\geq 0}$) denotes the set of (nonnegative) real numbers.
$\0$ ($\1$) denotes a matrix/vector with all elements equal to $0$ ($1$); we may add the dimension as subscript for clarity. $\|\cdot\|$ denotes the Euclidean  norm. $I_n\in\R^{n\times n}$ denotes the identity matrix of dimension $n$.  For a matrix $A \in \R^{n \times m}$, its transpose is $A^\top$, $[A]_{i,j}$ represents the element on the row $i$ and column $j$. $A\otimes B$ denotes the Kronecker product of the matrices $A$ and $B$.
$A \succ 0$  stands for symmetric positive definite matrix.
If $A$ is symmetric, $\lambda_{\textnormal{min}}(A):= \lambda_1(A)\leq\dots\leq\lambda_n(A)=:\lambda_{\textnormal{max}}(A)$ denote its eigenvalues.
Given $N$ vectors $x_1, \ldots, x_N$,  ${x} := \col\left(x_1,\ldots,x_N\right) = [ x_1^\top \ldots  x_N^\top ]^\top$, and for each $i=1,\dots,N$, ${x}_{-i} := \col\left(x_1,\ldots,x_{i-1},x_{i+1},\dots,x_N\right)$. 
For a differentiable function $g:\R^n \rightarrow \R$, $\nabla_{\!x} g(x)$ denotes its gradient.  
\newline
\indent
\emph{Operator-theoretic definitions:}  ${\overline{H}}$ denotes  the closure of a set $H\subseteq \R^n$.
A set-valued mapping $\mathcal{F} : \R^n \rightrightarrows \R^n$ is
($\mu$-strongly) monotone if $(u-v)^\top  (x-y) \geq 0 \, (\geq \mu \|x-y\|^2 )$ for all $x \neq y \in \R^n$, $u \in \mathcal{F} (x)$, $v \in \mathcal{F} (y)$. Given a closed, convex set $S\subseteq\mathbb{R}^n$, the mapping $\proj_{S}:\R^n \rightarrow S$ denotes the projection onto $S$, i.e., $\proj_{S}(v) := \argmin_{y \in S} \left\| y - v\right\|$.
The set-valued mapping $\nc_{S}: \R^n \rightrightarrows \R^n$ denotes the normal cone operator for the the set $S $, i.e., 
$\nc_{S}(x) = \varnothing$ if $x \notin S$, $\left\{ v \in \R^n \mid \sup_{z \in S} \, v^\top (z-x) \leq 0  \right\}$ otherwise.
The tangent cone of $S$ at a point $x\in S$ is defined as $\mathrm{T}_S(x)=\overline{{\bigcup_{\delta>0}\frac{1}{\delta}(S-x)}}$. 
$\Pi_{S}(x,v):=\proj_{\mathrm{T}_S(x)}(v)$ denotes the projection on the tangent cone of $S$ at $x$ of a vector $v\in\R^n$.
By Moreau's Decomposition Theorem \cite[Th.~6.30]{Bauschke2017}, it holds that $v=\proj_{\mathrm{T}_S(x)}(v)+\proj_{\nc_S(x)}(v)$ and $\proj_{\mathrm{T}_S(x)}(v)^\top \proj_{\nc_S(x)}(v)=0$.

\smallskip
\begin{lemma}\label{lem:minoratingproj}
	For any closed convex set $S\subseteq \R^q$, any $y,y^\prime \in S$ and any $\xi \in \R^q$, it holds that 
	$$
	\textstyle
	(y-y^\prime)^\top \Pi_{S}\left(y,\xi\right)\leq (y-y^\prime)^\top \xi.
	$$
	In particular, if $ \ \Pi_S(y,\xi)=0$, then
	$
	(y-y^\prime)^\top\xi \geq 0.
	$
	{\hfill $\square$}
\end{lemma}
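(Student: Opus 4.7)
The plan is to exploit Moreau's Decomposition Theorem, which is explicitly recalled just before the lemma statement. Setting $\eta := \Pi_{S}(y,\xi) = \proj_{\mathrm{T}_S(y)}(\xi)$ and $\nu := \proj_{\nc_S(y)}(\xi)$, Moreau gives $\xi = \eta + \nu$ with $\nu \in \nc_S(y)$. So the residual $\xi - \Pi_{S}(y,\xi)$ lies in the normal cone at $y$.

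Next, I would invoke the very definition of the normal cone: any $\nu \in \nc_S(y)$ satisfies $\nu^\top(z - y) \leq 0$ for every $z \in S$. Applying this with $z = y^\prime \in S$ yields $\nu^\top(y - y^\prime) \geq 0$, i.e.,
\begin{equation*}
(y-y^\prime)^\top \bigl(\xi - \Pi_S(y,\xi)\bigr) \geq 0,
\end{equation*}
which upon rearrangement is exactly the inequality $(y-y^\prime)^\top \Pi_S(y,\xi) \leq (y-y^\prime)^\top \xi$ claimed in the lemma.

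For the second assertion, substituting $\Pi_S(y,\xi) = 0$ directly into the inequality just proved yields $0 \leq (y-y^\prime)^\top \xi$, as required.

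I do not expect any real obstacle here: the entire argument is essentially a one-line application of Moreau's decomposition together with the defining inequality of the normal cone, both of which are invoked (or stated) in the preceding paragraph of the paper. The only subtlety worth checking is that $y \in S$ is needed so that $\nc_S(y)$ and $\mathrm{T}_S(y)$ are defined in the usual (nonempty) sense, and that $y^\prime \in S$ is the hypothesis that activates the normal-cone inequality; both are part of the statement.
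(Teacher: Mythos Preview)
Your argument is correct and is essentially identical to the paper's own proof: the paper also invokes Moreau's theorem to conclude that $\xi - \Pi_S(y,\xi) \in \nc_S(y)$ and then applies the defining inequality of the normal cone with $z = y'$. The second assertion is immediate from the first, exactly as you note.
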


\smallskip
\begin{proof}
	By Moreau's theorem, $\left(\xi-\Pi_{C}(y,\xi)\right)\in \mathrm{N}_S(y)$, hence for any $y,y^\prime\in C$,
	$
	(y^\prime-y)^\top(\xi- \Pi_{C}(y,\xi))\leq 0.
	$
\end{proof}

\smallskip
\section{Mathematical setup}\label{sec:mathbackground}

We consider a set of noncooperative agents, $ \mc I:=\{ 1,\ldots,N \}$, where each agent $i\in \mc{I}$ shall choose its decision variable (i.e., strategy) $x_i$ from its local decision set $\textstyle \Omega_i \subseteq \R^{n_i}$. Let $x = \col( (x_i)_{i \in \mc I})  \in \Omega $ denote the stacked vector of all the agents' decisions, $\textstyle \Omega = \times_{i \in \mc I} \Omega_i \subseteq \R^n$ the overall action space and $\textstyle n:=\sum_{i=1}^N n_i$. Moreover, let $ x_{-i}= \col( (x_j)_{j\in \mc I\backslash \{ i \} }) $ denote the collective strategy of the all the agents, except that of agent $i$.
The goal of each agent $i \in \mc I$ is to minimize its objective function $J_i(x_i,x_{-i})$ which depends on both the local variable $x_i$ and on the decision variables of the other agents $x_{-i}$.

{Furthermore, we consider \textit{generalized} games, 
where the agents' strategies are also coupled via some shared affine constraints. Thus
 the overall feasible set is}
\begin{align}\label{eq:feasibleset}
\textstyle
\mc{X} := \Omega \cap\left\{x \in \R^{n} \mid Ax\leq b \right\},
\end{align}
where $A:=\left[A_{1}, \ldots, A_{N}\right]$ and $b:=\sum_{i=1}^{N} b_{i}$, with $A_{i} \in \R^{m \times n_{i}}$ and $b_{i} \in \R^{m}$ being local data. 
The game then is represented by the inter-dependent optimization problems: 
\begin{align} \label{eq:game}
\forall i \in \mc{I}:
\quad \underset{y_i \in \R^{n_i}}{\argmin}  \; J_i(y_i,x_{-i}) 
\quad \text{s.t. }  \; \, (y_i,x_{-i}) \in \mc X.
\end{align} 
The technical problem we consider in this paper is the computation of a \gls{GNE}, i.e., a set of strategies from which no agent has an incentive to unilaterally deviate. 

\smallskip
\begin{definition}
	A collective strategy $x^{*}=\operatorname{col}\left((x_{i}^{*}\right)_{i \in \mathcal{I}})$ is a generalized Nash equilibrium if, for all $i \in \mc{I}$,
	\[
\quad x_{i}^{*} \in \underset{y_i}{\argmin} \, J_{i}\left(y_{i}, x_{-i}^{*}\right) \text { s.t. } (y_{i},x^*_{-i})\in \mc{X}. \QEDopenhereeqn
	\]
\end{definition}

\smallskip
Next, we postulate standard  regularity
assumptions for the constraint sets and cost functions, \cite[Ass.~1]{Pavel2018}, \cite[Ass.~1]{DePersisGrammatico2018}. 

\smallskip
\begin{standing}\label{Ass:Convexity}
	For each $i\in \mathcal{I}$, the set $\Omega_i$ is non-empty, closed and convex; $\mc{X}$ is non-empty 
	and satisfies Slater's constraint qualification;  $J_{i}$ is continuously differentiable and  $J_{i}\left(\cdot, x_{-i}\right)$ is convex for every $x_{-i}$.
	{\hfill $\square$} \end{standing}

\smallskip
Among all the possible \glspl{GNE}, we focus on the  subclass of \gls{v-GNE} \cite[Def.~3.11]{FacchineiKanzow2010}.
Under the previous assumption, $x^*$ is a \gls{v-GNE} of the game in \eqref{eq:game} if and only if there exist a dual variable $\lambda^*\in \R^m $ such that the following \gls{KKT} conditions are satisfied \cite[Th.~4.8]{FacchineiKanzow2010}:
\begin{align} \label{eq:KKT}
\begin{aligned}
\0_{n} & \in  F\left(x^{*}\right)+A^\top \lambda^{*}+\mathrm{N}_{\Omega}\left(x^{*}\right)
\\
 {\0_{m}} & \in  -\left(A x^{*}-b\right)+\mathrm{N}_{\R_{\geq 0}}^{m}\left(\lambda^{*}\right),
 \end{aligned}
\end{align}
where $F$ is the \emph{pseudo-gradient} mapping of the game:
\begin{align}
\label{eq:pseudo-gradient}
F(x):=\operatorname{col}\left( (\nabla _{\! x_i} J_i(x_i,x_{-i}))_{i\in\mathcal{I}}\right).
\end{align}
We will assume strong monotonicity of the pseudo-gradient \cite[Ass.~2]{GadjovPavel2018}, \cite[Ass.~3]{BelgioiosoGrammatico_ECC_2018}, \cite[Ass.~4]{DePersisGrammatico2018}, which is a sufficient condition for the existence of a unique \gls{v-GNE} for the game in \eqref{eq:game} \cite[Th. 2.3.3]{FacchineiPang2007}. This condition requires strong convexity of the functions $J_i(\cdot,x_{-i})$, for every $x_{-i}$, but not necessarily convexity of $J_i$ in its full argument. 

\smallskip
\sloppy
\begin{standing}\label{Ass:StrMon}
	The pseudo-gradient mapping in \eqref{eq:pseudo-gradient}  is $\mu$-strongly monotone and $\theta_0$-Lipschitz continuous, for some $\mu$, $\theta>0$: for any pair $x,y\in\R^n$, $\textstyle (x-y ) ^\top(F(x)-F(y))\geq \mu\|x-y\|^2$ and $\|F(x)-F(y)\|\leq \theta_0 \|x-y\|$.
	\hfill $\square$
 \end{standing}


\section{{Distributed generalized  equilibrium seeking}}\label{sec:distributedGNE} 
In this section, we consider the game in  \eqref{eq:game}, where each agent is associated with a dynamical system:
\begin{equation}\label{eq:integrators}
\textstyle
\forall i \in \mathcal{I}: \quad \dot{x}_i=\Pi_{\Omega_{i}}\left(x_i,u_i \right), \ x_i(0)\in \Omega_{i}.
\end{equation}
Our aim is to design the inputs $u_i$ to seek a \gls{v-GNE} in a fully distributed way. Specifically, agent $i$ does not have full knowledge of $x_{-i}$, and only relies on the information exchanged locally with neighbors over a communication network $\mathcal G(\mc{I},\mc{E})$, with weighted symmetric Laplacian $L\in \R^{N\times N}$. The unordered pair $(i,j)$ belongs to the set of edges, $\mc{E}$, if and only if agent $j$ and $i$ can exchange information.

\smallskip
\begin{standing}
	\label{Ass:Graph}
	The communication graph $\mathcal G (\mc{I},\mc{E}) $ is undirected and connected. 
	{\hfill $\square$} 
\end{standing}

\smallskip
 To cope with partial-information, each agent keeps an estimate of all other agents' actions. We denote $\bs{x}^i=\operatorname{col}((\bs{x}^i_j)_{j\in \mc{I}})\in \R^{Nn}$,  where $\bs{x} ^i_i:=x_i$ and $\bs{x}^i_j$ is $i$'s estimate of agent $j$'s action, for all $j\neq i$; 
$\bs{x}^j_{-i}=\col((\bs{x}^j_\ell)_{\ell\in\mc{I}\backslash{\{i\}}})$. Also, each agent keeps an estimate $\lambda_i\in\R^m_{\geq 0}$ of the Lagrangian multiplier and an auxiliary variable $z_i\in \R^m $ to allow for distributed consensus of the multiplier estimates. 
Our closed-loop dynamics are shown in Algorithm~\ref{algo:1}, where  $c>0$ is a global constant parameter, $W=\left[w_{i j}\right]_{i,j \in \{1,\dots,N\}} \in \R^{N\times N}$ is the weighted adjacency matrix of the graph $\mathcal G$, $\mathcal{N}_i$ is the set of neighbors of agent $i$ and $\bs{x}^i_{-i}(0)\in \R^{n-n_i},\lambda_{i}(0)\in \R^m_{\geq 0}, z_i(0)\in \R^{m}$ can be chosen arbitrarily. 

\newlength{\textfloatsepsave} \setlength{\textfloatsepsave}{\textfloatsep} \setlength{\textfloatsep}{6pt} 
\begin{algorithm}[t] \caption{Distributed GNE seeking} \label{algo:1}
		\vspace{-0.4em}
	\begin{align}
	\nonumber
	\begin{aligned}
	\dot{x}_i&=\Pi_{\Omega_{i}}\left(x_i,u_i \right)
	\\
	u_i&= - \bigl(\nabla_{\! x_{i}} J_{i}(x_{i}, \bs{x}_{-i}^{i})+A_{i}^{\top} \lambda_{i}+ 
	 c\textstyle\sum _{j \in \mathcal{N}_{i}} w_{i j}(x_{i}-\bs{x}_{i}^{j}) \bigr)
	\\
	\dot{\bs{x}}^i_{-i}&=-c\textstyle\sum_{j \in \mathcal{N}_{i}} w_{i j}  (\bs{x}_{-i}^{i}-\bs{x}_{-i}^{j} ) 
	\\
	{\dot z}_i&=\textstyle \sum_{j \in \mathcal{N}_{i}} w_{i j}(\lambda_{i}-\lambda_{j} )
	\\
	\dot\lambda_i&=
	\Pi_{\R_{\geq 0}^m}\bigl(\lambda_i, A_{i}x_{i }-b_{i}-\underset{j \in \mathcal{N}_{i}} {\textstyle \sum} w_{i j}\left(z_{i}-z_{j}+\lambda_i-\lambda_j\right) \bigr)
	\end{aligned}
	\end{align}
	\vspace{-0.6em}
\end{algorithm}

\smallskip
The agents exchange $\{ \bs{x}^i,z_i,\lambda_i \}$ with their neighbors only, therefore the controller can be implemented distributedly. In steady state, the agents should agree on their estimates, i.e., $\bs{x}^i=\bs{x}^j, \ \lambda_i=\lambda_j$, for all $i,j \in \mc{I}$. This motivates the presence of consensual terms for both the primal and dual variables. We denote  $\bs{E}_{q}:=\{\bs{y} \in \R^{N q}:\bs{y}=\1_{N} \otimes y, y\in \R^q\}$ the consensual subspace of dimension $q$, for some $q>0$, and $\bs{E}_{q}^\perp$ its orthogonal complement. Specifically, $\bs{E}_n$ is the estimate consensus subspace and $\bs{E}_m$ is the multiplier consensus subspace.  
To write the dynamics in compact form, let us define $\bs{x}=\col((\bs{x}^i)_{i\in\mc{I}})$, and, as in \cite[Eq.~13-14]{Pavel2018}, for all $i \in \mc{I}$,
\begin{subequations}
	\begin{align}
	\mathcal{R}_{i}:=&\left[ \begin{array}{lll}{{0}_{n_{i} \times n_{<i}}} & {I_{n_{i}}} & {\0_{n_{i} \times n_{>i}}}\end{array}\right], 
	\\
	\mathcal{S}_{i}:=&\left[ \begin{array}{ccc}{I_{n<i}} & {\0_{n<i \times n_{i}}} & {\0_{n<i \times n>i}} \\ {\0_{n>i \times n<i}} & {\0_{n>i \times n_{i}}} & {I_{n>i}}\end{array}\right],
	\end{align}
\end{subequations}
where $n_{<i}:=\sum_{j<i,j \in \mathcal{I}} n_{j}$, $n_{>i}:=\sum_{j>i, j \in \mathcal{I}} n_{j}$. In simple terms, $\mathcal R _i$ selects the $i$-th $n_i$ dimensional component from an $n$-dimensional vector, while $\mathcal S_i$ removes it. Thus, $\mathcal{R}_{i} \bs{x}^{i}=x_i$ and $\mathcal{S}_{i} \bs{x}^{i}=\bs{x}_{-i}^{i}$. We define $\mathcal{R}:=\operatorname{diag}\left((\mathcal{R}_{i})_{i \in \mathcal{I}}\right)$, $ \mathcal{S}:=\operatorname{diag}\left((\mathcal{S}_{i})_{i \in \mathcal{I}}\right)$. It follows that $x=\mathcal{R} \bs{x}$,  $\operatorname{col}((\bs{x}_{-i}^{i})_{i \in \mathcal{I}})=\mathcal{S}{\bs{x}} \in \R^{(N-1) n}$, and
$
	\bs{x}=\mathcal{R}^\top x+\mathcal{S}^\top\mc{S} \bs{x}.
$
Let $\bs \lambda:=\operatorname{col}((\lambda_i)_{i\in\mathcal I})$, $\boldsymbol{\Lambda}:=\operatorname{diag} \left( (A_{i})_{i \in \mathcal{I}}\right)$, 
$\boldsymbol{b}:=\operatorname{col}\left((b_{i})_{i \in \mathcal{I}}\right)$, $\bs{L}_{\textnormal{x}}:=L \otimes I_{n}$, $\bs{L}_{\lambda}:=L \otimes I_{m}$, $\boldsymbol{z}:=\operatorname{col}\left((z_{i})_{i \in \mathcal{I}}\right)$.  Further, we define the \emph{extended pseudo-gradient} mapping  $\bs{F}$ as:
\begin{align}
\label{eq:extended_pseudo-gradient}
\bs{F}(\bs{x}):=\operatorname{col}\left((\nabla_{\!x_{i}} J_{i}\left(x_{i}, \bs{x}_{-i}^{i}\right))_{i \in \mathcal{I}}\right).
\end{align}
The overall closed-loop system, in compact form, reads as:
\begin{subequations}
	\label{eq:dynamics:CG}
	\begin{align}\label{eq:dynamics:CG:a}
	\nonumber \dot { \bs{x}}=&
	\mathcal{R}^\top  \Pi_{\Omega} \bigl(\mathcal{R} \bs{x}, -\bigl(\bs{F} (\bs{x})+\boldsymbol{\Lambda}^\top \boldsymbol{\lambda}+c \mathcal{R} \bs{L}_{\textnormal{x}} \bs{x}\bigr)\bigr) +
	\\
	&
	\mathcal{S}^\top\left(-c \, \mathcal{S} \bs{L}_{\textnormal{x}} \bs{x} \right)
	\\
	\label{eq:dynamics:CG:b}
	\dot{\boldsymbol{z}}= & 
	\bs{L}_{\lambda} \boldsymbol{\lambda}
	\\
	\label{eq:dynamics:CG:c}
	\dot{\boldsymbol{\lambda}}= & \Pi_{\R_{\geq 0}^{N m}}\left(\boldsymbol{\lambda},\left(\boldsymbol{\Lambda}\mathcal R \bs{x}-\boldsymbol{b}\right.\right.  {-\bs{L}_{\lambda} \boldsymbol{\lambda}-\bs{L}_{\lambda} \boldsymbol{z} ) )}.
	\end{align}
\end{subequations}
The following Lemma relates the equilibria of the system in \eqref{eq:dynamics:CG} to the \gls{v-GNE} of the game in \eqref{eq:game}.

\noindent The proof  is analogous to \cite[Th.~1]{Pavel2018}, hence it is omitted. 
\smallskip

\begin{lemma}\label{lem:equilibria-vGNE}
The following statements hold:
	\begin{enumerate}
		\item[i)] Any equilibrium point $\bar{\boldsymbol{\omega}}=\operatorname{col}\left( \bar{\bs{x}},\bar{\bs z},\bar{\bs \lambda}\right) $ of the dynamics in  \eqref{eq:dynamics:CG} is such that $\bar{\bs{x}}=\1_{N} \otimes x^{*}$, $\bar{\bs \lambda}=\1_{N} \otimes \lambda^{*}$, where the pair $(x^*,\lambda^*) $ satisfies the KKT conditions in \eqref{eq:KKT}, hence $x^*$ is the \gls{v-GNE} of the game in \eqref{eq:game};
		\item[ii)] 
The set of equilibrium points of \eqref{eq:dynamics:CG} is nonempty. 	{\hfill $\square$}
	\end{enumerate}    
\end{lemma}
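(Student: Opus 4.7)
The plan is to derive the KKT conditions \eqref{eq:KKT} directly from setting each block of \eqref{eq:dynamics:CG} to zero, then to construct an equilibrium explicitly for part~(ii). First, from $\dot{\bs z}=0$ in \eqref{eq:dynamics:CG:b} and connectedness (Standing Assumption~\ref{Ass:Graph}), I conclude $\bar{\bs \lambda}\in \bs E_m$, i.e., $\bar{\bs\lambda}=\1_N\otimes\lambda^*$ for some $\lambda^*\in\R^m_{\geq 0}$. Next, using the orthogonality relations $\mathcal R\mathcal R^\top=I_n$, $\mathcal R\mathcal S^\top=0$, $\mathcal S\mathcal S^\top=I_{(N-1)n}$, the identity $\dot{\bs x}=0$ in \eqref{eq:dynamics:CG:a} decouples, by left-multiplication with $\mathcal R$ and $\mathcal S$, into the projection equation $\Pi_\Omega(\mathcal R\bar{\bs x},-(\bs F(\bar{\bs x})+\bs\Lambda^\top\bar{\bs\lambda}+c\mathcal R\bs L_{\textnormal x}\bar{\bs x}))=0$ together with the algebraic identity $\mathcal S\bs L_{\textnormal x}\bar{\bs x}=0$.

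The main obstacle is lifting this off-diagonal identity to the full consensus $\bs L_{\textnormal x}\bar{\bs x}=0$. My plan: since $\mathcal S_i(\bs L_{\textnormal x}\bar{\bs x})_i=0$, each block $(\bs L_{\textnormal x}\bar{\bs x})_i$ must lie in $\range(\mathcal R_i^\top)$, so $(\bs L_{\textnormal x}\bar{\bs x})_i=\mathcal R_i^\top v_i$ for some $v_i\in\R^{n_i}$; left-multiplying $\bs L_{\textnormal x}\bar{\bs x}$ by $\1_N^\top\otimes I_n$ gives zero because $\1_N^\top L=0$, whence $\sum_i\mathcal R_i^\top v_i=0$, and since the supports of the $\mathcal R_i^\top$ are pairwise disjoint this forces every $v_i=0$. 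Therefore $\bar{\bs x}=\1_N\otimes x^*$ with $x^*=\mathcal R\bar{\bs x}$, which in turn yields $\bs F(\bar{\bs x})=F(x^*)$ and $\bs\Lambda^\top\bar{\bs\lambda}=A^\top\lambda^*$. Lemma~\ref{lem:minoratingproj} applied to the surviving projection equation $\Pi_\Omega(x^*,-(F(x^*)+A^\top\lambda^*))=0$ then produces the first line of \eqref{eq:KKT}. For the second line, I would apply Lemma~\ref{lem:minoratingproj} to the projection in \eqref{eq:dynamics:CG:c} using test vectors of the form $\1_N\otimes\mu$ with $\mu\in\R_{\geq 0}^m$; since $\1_N^\top L=0$, the $\bs z$- and $\bs\lambda$-consensus contributions vanish, and block-summing leaves $(\lambda^*-\mu)^\top(Ax^*-b)\geq 0$, i.e., $0\in -(Ax^*-b)+\nc_{\R_{\geq 0}^m}(\lambda^*)$.

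For part~(ii), Standing Assumptions~\ref{Ass:Convexity}--\ref{Ass:StrMon} (Slater and strong monotonicity) ensure existence of a KKT pair $(x^*,\lambda^*)$ for \eqref{eq:KKT} by standard variational-inequality arguments. I would set $\bar{\bs x}=\1_N\otimes x^*$ and $\bar{\bs\lambda}=\1_N\otimes\lambda^*$; then \eqref{eq:dynamics:CG:a}--\eqref{eq:dynamics:CG:b} are satisfied by the analysis of part~(i), and for \eqref{eq:dynamics:CG:c} it suffices to pick $\bar{\bs z}$ with $(\bs L_\lambda\bar{\bs z})_i=A_ix^*_i-b_i-\tfrac{1}{N}(Ax^*-b)$. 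Since $Ax^*-b\in\nc_{\R_{\geq 0}^m}(\lambda^*)$ by KKT and the normal cone is a cone, the residual entering each block projection lies in $\nc_{\R_{\geq 0}^m}(\lambda^*)$, making the projection vanish; moreover the right-hand side sums to zero over $i$, hence belongs to $\bs E_m^\perp=\range(\bs L_\lambda)$, so a compatible $\bar{\bs z}$ exists.
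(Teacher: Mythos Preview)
Your proof is correct. The paper itself omits the argument, deferring to \cite[Th.~1]{Pavel2018}, so there is no in-paper proof to compare against; your write-up supplies a self-contained derivation that is entirely consistent with the tools and standing assumptions in the paper.

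The one step worth singling out is your lifting from $\mathcal{S}\bs L_{\textnormal{x}}\bar{\bs x}=0$ to $\bs L_{\textnormal{x}}\bar{\bs x}=0$: the observation that each block $(\bs L_{\textnormal{x}}\bar{\bs x})_i$ lies in $\range(\mathcal{R}_i^\top)$, combined with $(\1_N^\top\otimes I_n)\bs L_{\textnormal{x}}=0$ and the disjoint supports of the $\mathcal{R}_i^\top$, is exactly the mechanism used (in slightly different packaging) in Pavel's proof, and it cleanly avoids having to invoke the projection equation to force consensus of the primal estimates. Likewise, your construction of $\bar{\bs z}$ for part~(ii) via the averaged residual $A_ix_i^*-b_i-\tfrac{1}{N}(Ax^*-b)$, and the verification that this lies in $\range(\bs L_\lambda)=\bs E_m^\perp$, matches the standard argument. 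Nothing is missing.
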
 

\smallskip
\begin{remark}\label{rem:extpseudo}
	When considering Algorithm~\ref{algo:1} in absence of coupling constraints, we retrieve the controller in \cite[Eq.~47]{GadjovPavel2018}.
	In Algorithm~\ref{algo:1}, each agent evaluates the gradient of its cost function in its local estimate, not on the actual collective strategy. 
	In fact, only when the estimates belong to the consensus subspace, i.e., $\bs{x}=\1\otimes x$ (i.e., the estimate $\bs{x}^i$ of each agent coincide with the real actions $x$,  for example in the case of full-information), we have that $\bs{F}(\bs{x})=F(x)$.
	It follows that the operator $\mc{R}^\top\bs{F}$ is not necessarily monotone, not even if the pseudo gradient $F$ in \eqref{eq:pseudo-gradient} is strongly monotone, as in  Standing Assumption~\ref{Ass:StrMon}. This is the main technical difficulty that arises in (G)\gls{NE} problems under partial-information. 	{\hfill $\square$}
\end{remark}

\smallskip
\begin{lemma}\label{lem:LipschitzExtPseudo}
	 The extended pseudo-gradient mapping $\bs{F}$ in \eqref{eq:extended_pseudo-gradient} is $\theta$-Lipschitz continuous, for some $\mu\leq\theta\leq \theta_0$: for any  $\bs{x},\bs{y}\in \R^{Nn}$, $\| \bs{F} (\bs{x})-\bs{F}(\bs{y})\|\leq \theta \|\bs{x}-\bs{y}\|$.
	{\hfill $\square$} \end{lemma}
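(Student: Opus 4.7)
The plan is to establish the two bounds $\theta\leq \theta_0$ and $\theta\geq \mu$ separately, where we take $\theta$ to be the smallest Lipschitz constant of $\bs{F}$.

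\textbf{Upper bound $\theta\le\theta_0$.} The key observation is that, for each agent $i$, the $i$-th block of $\bs{F}(\bs{x})$, namely $\nabla_{x_i} J_i(x_i,\bs{x}^i_{-i})$, coincides with the $i$-th block of $F(\bs{x}^i)$, when one regards $\bs{x}^i\in\R^n$ as a formal full collective strategy (with $\bs{x}^i_i=x_i$). Denoting by $[\cdot]_i$ the extraction of the $i$-th block, I would write
\begin{align*}
\|\bs{F}(\bs{x})-\bs{F}(\bs{y})\|^2
\;=\;\sum_{i\in\mc I}\bigl\|[F(\bs{x}^i)-F(\bs{y}^i)]_i\bigr\|^2
\;\le\;\sum_{i\in\mc I}\|F(\bs{x}^i)-F(\bs{y}^i)\|^2,
\end{align*}
and then invoke Standing Assumption~\ref{Ass:StrMon} on each summand to obtain the chain
$\sum_i\|F(\bs{x}^i)-F(\bs{y}^i)\|^2\le\theta_0^2\sum_i\|\bs{x}^i-\bs{y}^i\|^2=\theta_0^2\|\bs{x}-\bs{y}\|^2$, which is exactly the desired Lipschitz estimate with constant $\theta_0$.

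\textbf{Lower bound $\theta\ge\mu$.} Here the idea is to exhibit a pair $\bs{x},\bs{y}$ where the ratio $\|\bs{F}(\bs{x})-\bs{F}(\bs{y})\|/\|\bs{x}-\bs{y}\|$ is at least $\mu$. First I would derive, as a consequence of $\mu$-strong monotonicity of $F$, the partial estimate
\[
\|\nabla_{x_i}J_i(x_i,z_{-i})-\nabla_{x_i}J_i(y_i,z_{-i})\|\;\ge\;\mu\,\|x_i-y_i\|
\]
for any fixed $z_{-i}$: take two collective vectors differing only in the $i$-th coordinate; then $\langle F(\cdot)-F(\cdot),\,\cdot-\cdot\rangle\ge\mu\|x_i-y_i\|^2$ reduces to the single inner product of the $i$-th blocks, and Cauchy--Schwarz gives the claim. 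Next, I choose $\bs{y}$ equal to $\bs{x}$ everywhere except in the sub-block $\bs{x}^i_i=x_i$, which I replace by $y_i$. Since $\bs{x}^j$ is untouched for $j\ne i$, all blocks of $\bs{F}$ other than the $i$-th coincide, so $\|\bs{F}(\bs{x})-\bs{F}(\bs{y})\|=\|\nabla_{x_i}J_i(x_i,\bs{x}^i_{-i})-\nabla_{x_i}J_i(y_i,\bs{x}^i_{-i})\|\ge\mu\|x_i-y_i\|=\mu\|\bs{x}-\bs{y}\|$, as required.

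There is no real obstacle; the only subtlety is getting the bookkeeping right to see that a perturbation of $\bs{x}^i_i$ affects only the $i$-th block of $\bs{F}$ (because the $j$-th block depends on $\bs{x}^j$, not on $\bs{x}^i$), and that the consensual choice $\bs{x}=\mathbf 1\otimes x$ alone would only deliver $\theta\ge\mu/\sqrt N$. Finally, consistency of the interval $[\mu,\theta_0]$ follows from the general fact that strong monotonicity and Lipschitz continuity of $F$ force $\mu\le\theta_0$.
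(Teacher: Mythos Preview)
Your proposal is correct and follows essentially the same route as the paper. The upper bound is identical; for the lower bound the paper chooses the slightly more general perturbation $\mc S\bs{x}=\mc S\bs{y}$ (all own-action blocks may differ simultaneously), whereas you perturb a single $\bs{x}^i_i$, but the underlying mechanism---strong monotonicity of $F$ applied to vectors differing only in the $i$-th block, followed by Cauchy--Schwarz---is the same.
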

\begin{proof} See Appendix~\ref{app:lem:LipschitzExtPseudo}.
	\end{proof}
%

\smallskip
Under Lipschitz continuity of $\bs{F}$,
the work   \cite{Pavel2018} showed the following \emph{restricted strong monotonicity} property, which is crucial to prove convergence of the dynamics in \eqref{eq:dynamics:CG}.

\smallskip
\begin{lemma}[{\cite[Lem. $3$]{Pavel2018}}] \label{lem:strongmon_constant}
	Let
	\begin{align}
\label{eq:M1}
\mc{M}:=\begin{bmatrix}{\frac{\mu}{N}} & \ {-\frac{\theta_0+\theta}{2\sqrt{N}}} \\ {-\frac{\theta_0+\theta}{2\sqrt{N}}} & \ {c \lambda_{2}(L)-\theta}\end{bmatrix}, 
\quad \underline{c} :=\textstyle \frac{(\theta_0+\theta)^{2}+4\mu\theta}{4\mu\lambda_2(L)}.
\end{align}
	For any $c>\underline{c}$, for any $\bs{x}$ and any $\bs{x}^{\prime}\in \bs{E}_{n}$,  it holds that $\mc{M} \succ 0$  and that
    \[
	\begin{multlined}[b]
	\left(\bs{x}-\bs{x}^{\prime}\right)^\top\left(\mathcal{R}^\top\left(\bs{F}(\bs{x})-\bs{F}\left(\bs{x}^{\prime}\right)\right)+c \bs{L}_{\textnormal{x}}\left(\bs{x}-\bs{x}^{\prime}\right)\right) \\ \geq \lambda_{\textnormal{min}} (\mc{M})\left\|\bs{x}-\bs{x}^{\prime}\right\|^{2}.
	\end{multlined} \QEDopenhereeqn \]
\end{lemma}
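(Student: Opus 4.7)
The plan is to decompose $\bs{x}-\bs{x}'$ into components along and orthogonal to the consensus subspace $\bs{E}_{n}$, and to show that the pseudo-gradient contribution controls the consensual component via the strong monotonicity of $F$, while the Laplacian contribution controls the orthogonal component via its spectral gap $\lambda_{2}(L)$. To that end, let $\bar{\bs{x}}\in\bs{E}_{n}$ denote the orthogonal projection of $\bs{x}$ onto $\bs{E}_{n}$, and write $\bar{\bs{x}}=\1_{N}\otimes\bar x$, $\bs{x}'=\1_{N}\otimes x'$, and $\bs{x}_{\perp}:=\bs{x}-\bar{\bs{x}}\in\bs{E}_{n}^{\perp}$. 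Orthogonality gives $\|\bs{x}-\bs{x}'\|^{2}=\|\bs{x}_{\perp}\|^{2}+N\|\bar x-x'\|^{2}$, and by construction of $\bs{F}$ on the consensus subspace one has $\bs{F}(\bar{\bs{x}})=F(\bar x)$ and $\bs{F}(\bs{x}')=F(x')$.

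The Laplacian term collapses quickly: since $L\1_{N}=0$, only $\bs{x}_{\perp}$ contributes, and symmetry of $\bs{L}_{\textnormal{x}}$ (whose image lies in $\bs{E}_{n}^{\perp}$) together with $(\bar{\bs{x}}-\bs{x}')\perp\bs{L}_{\textnormal{x}}\bs{x}_{\perp}$ reduces it to $c\bs{x}_{\perp}^{\top}\bs{L}_{\textnormal{x}}\bs{x}_{\perp}\geq c\lambda_{2}(L)\|\bs{x}_{\perp}\|^{2}$. For the pseudo-gradient term I would add and subtract $\bs{F}(\bar{\bs{x}})$ and split $\mc{R}(\bs{x}-\bs{x}')=\mc{R}\bs{x}_{\perp}+(\bar x-x')$. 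This produces four inner products; the sign-definite one is $(\bar x-x')^{\top}(F(\bar x)-F(x'))\geq\mu\|\bar x-x'\|^{2}$ by Standing Assumption~\ref{Ass:StrMon}. The three remaining cross products are bounded by Cauchy--Schwarz, using $\theta_{0}$-Lipschitzness of $F$, $\theta$-Lipschitzness of $\bs{F}$ from Lemma~\ref{lem:LipschitzExtPseudo}, and $\|\mc{R}v\|\leq\|v\|$ (since $\mc{R}\mc{R}^{\top}=I_{n}$).

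Setting $a:=\|\bs{x}_{\perp}\|$ and $b:=\sqrt{N}\|\bar x-x'\|$, the accumulated estimates yield
\[
\text{LHS}\;\geq\;(c\lambda_{2}(L)-\theta)\,a^{2}+\tfrac{\mu}{N}\,b^{2}-\tfrac{\theta_{0}+\theta}{\sqrt{N}}\,a b\;=\;\begin{bmatrix}b\\ a\end{bmatrix}^{\!\top}\!\mc{M}\begin{bmatrix}b\\ a\end{bmatrix},
\]
with $\mc{M}$ as in \eqref{eq:M1}. Since $a^{2}+b^{2}=\|\bs{x}-\bs{x}'\|^{2}$, the claim reduces to $\mc{M}\succ 0$, which is a routine $2\times 2$ check: the diagonal entries are positive (the threshold $c>\theta/\lambda_{2}(L)$ is implied by $c>\underline c$), and positive determinant is equivalent to $\mu(c\lambda_{2}(L)-\theta)>(\theta_{0}+\theta)^{2}/4$, i.e., $c>\underline c$.

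The main obstacle is the pseudo-gradient step: as Remark~\ref{rem:extpseudo} indicates, $\mc{R}^{\top}\bs{F}$ is not monotone, so strong-convexity-type information can be extracted only along the consensual direction $\bar x-x'$; the coupling to $\bs{x}_{\perp}$ produces cross terms that must be absorbed by the Laplacian's coercivity on $\bs{E}_{n}^{\perp}$, and it is exactly this absorption that dictates the threshold $c>\underline c$.
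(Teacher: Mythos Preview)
The paper does not supply its own proof of this lemma: it is stated with attribution to \cite[Lem.~3]{Pavel2018} and used as a black box, so there is no in-paper argument to compare against. Your proposal is correct and is precisely the standard proof from the cited reference: the orthogonal decomposition $\bs{x}-\bs{x}'=\bs{x}_{\perp}+(\bar{\bs{x}}-\bs{x}')$ along $\bs{E}_{n}^{\perp}\oplus\bs{E}_{n}$, the add--subtract of $\bs{F}(\bar{\bs{x}})$, the four Cauchy--Schwarz bounds using $\mu$, $\theta_{0}$, $\theta$, and the spectral-gap estimate $c\,\bs{x}_{\perp}^{\top}\bs{L}_{\textnormal{x}}\bs{x}_{\perp}\geq c\lambda_{2}(L)\|\bs{x}_{\perp}\|^{2}$ are exactly the ingredients that produce the $2\times 2$ matrix $\mc{M}$ in \eqref{eq:M1}. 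The determinant check yielding the threshold $c>\underline{c}$ is also as in the original source. Nothing is missing.
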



\smallskip
By leveraging Lemma~\ref{lem:strongmon_constant}, we can  now prove the main result of this section, i.e., the convergence of the dynamics in  \eqref{eq:dynamics:CG} to a \gls{v-GNE}. 

\smallskip
\begin{theorem}\label{th:main1}
	Let $c>\underline{c}$, with  $\underline{c}$ as in \eqref{eq:M1}, and let  $\bs{\Omega}:=\{\bs{x} \in \R^{Nn} \mid \mc{R}\bs{x}\in \Omega\}$. For any initial condition in $\Xi=\bs{\Omega}\times \R^{mN}\times \R^{mN}_{\geq 0}$, the dynamics in \eqref{eq:dynamics:CG} have a unique Carathéodory solution, which belongs to $\Xi$ for all $t\geq 0$.  The solution converges to an equilibrium $\operatorname{col}\left( \bar{\bs{x}},\bar{\bs z},\bar{\bs \lambda}\right) $, with  $\bar{\bs{x}}=\1_{N} \otimes x^{*}$, $\bar{\bs \lambda}=\1_{N} \otimes \lambda^{*}$, where the pair $(x^*,\lambda^*)$ satisfies the KKT conditions in \eqref{eq:KKT}, hence $x^*$ is the \gls{v-GNE} of the game in \eqref{eq:game}.
	{\hfill $\square$}
\end{theorem}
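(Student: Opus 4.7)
My plan is to prove the theorem in two stages: well-posedness of the projected dynamical system \eqref{eq:dynamics:CG}, then convergence via a Lyapunov plus LaSalle argument that hinges on the restricted strong monotonicity of Lemma~\ref{lem:strongmon_constant}.

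For existence, uniqueness, and invariance, I would invoke standard projected-dynamical-system theory: the unprojected right-hand sides in \eqref{eq:dynamics:CG} are globally Lipschitz (by Lemma~\ref{lem:LipschitzExtPseudo} for $\bs{F}$ and by affinity of everything else), and the tangent-cone projections $\Pi_{\Omega_i}$ and $\Pi_{\R_{\geq 0}^m}$ together with continuity of the data produce a unique Carathéodory solution that keeps the primal block in $\bs{\Omega}$ and the dual block in $\R_{\geq 0}^{Nm}$ for all $t\geq 0$, so $\Xi$ is forward invariant.

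For convergence, fix an equilibrium $\bar{\boldsymbol{\omega}}=\col(\bar{\bs{x}},\bar{\boldsymbol{z}},\bar{\boldsymbol{\lambda}})$ provided by Lemma~\ref{lem:equilibria-vGNE}, so that $\bar{\bs{x}}=\1_{N}\otimes x^{*}\in\bs{E}_{n}$ and $\bar{\boldsymbol{\lambda}}=\1_{N}\otimes\lambda^{*}\in\bs{E}_{m}$, and take
\[
V(\boldsymbol{\omega}) := \tfrac{1}{2}\|\bs{x}-\bar{\bs{x}}\|^{2} + \tfrac{1}{2}\|\boldsymbol{z}-\bar{\boldsymbol{z}}\|^{2} + \tfrac{1}{2}\|\boldsymbol{\lambda}-\bar{\boldsymbol{\lambda}}\|^{2}.
\]
Differentiating $V$ along \eqref{eq:dynamics:CG} and invoking Lemma~\ref{lem:minoratingproj} on the two tangent-cone projections in \eqref{eq:dynamics:CG:a} and \eqref{eq:dynamics:CG:c} (legitimate since $\mc{R}\bar{\bs{x}}\in\Omega$ and $\bar{\boldsymbol{\lambda}}\in\R_{\geq 0}^{Nm}$) upper-bounds $\dot V$ by an expression in which I subtract the equilibrium identities $\bs{L}_{\lambda}\bar{\boldsymbol{\lambda}}=0$, $\mc{R}\bs{L}_{\textnormal{x}}\bar{\bs{x}}=0$, and the KKT relations of Lemma~\ref{lem:equilibria-vGNE}. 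After this subtraction the primal--dual cross terms involving $\boldsymbol{\Lambda}^\top(\boldsymbol{\lambda}-\bar{\boldsymbol{\lambda}})$ and the consensus cross terms between $\boldsymbol{z}-\bar{\boldsymbol{z}}$ and $\boldsymbol{\lambda}-\bar{\boldsymbol{\lambda}}$ cancel exactly by symmetry of $L$; what remains is a primal block, bounded below via Lemma~\ref{lem:strongmon_constant} applied to $\bs{x}$ and $\bar{\bs{x}}\in\bs{E}_{n}$, and a dissipation term coming from the dual consensus dynamics. For $c>\underline{c}$ this yields
\[
\dot V \leq -\lambda_{\textnormal{min}}(\mc{M})\|\bs{x}-\bar{\bs{x}}\|^{2} - (\boldsymbol{\lambda}-\bar{\boldsymbol{\lambda}})^\top\bs{L}_{\lambda}(\boldsymbol{\lambda}-\bar{\boldsymbol{\lambda}}) \leq 0.
\]
Radial unboundedness of $V$ forces boundedness of trajectories, and LaSalle's invariance principle then shows that every $\omega$-limit point lies in the largest invariant subset of $\{\dot V=0\}$, where $\bs{x}\equiv\bar{\bs{x}}$ and $\boldsymbol{\lambda}\in\bs{E}_{m}$. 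Plugging these into \eqref{eq:dynamics:CG} forces $\dot{\boldsymbol{\lambda}}=\dot{\boldsymbol{z}}=\0$, so every $\omega$-limit point is an equilibrium. Re-centering $V$ at one such $\omega$-limit point (for which the same computation shows that $V$ is still a Lyapunov function) promotes this to convergence to a single equilibrium, whose structure is the one asserted by Lemma~\ref{lem:equilibria-vGNE}.

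The main obstacle, as emphasized in Remark~\ref{rem:extpseudo}, is that the extended pseudo-gradient $\bs{F}$ is not monotone, so the primal block of $\dot V$ cannot be controlled by any textbook argument. The whole strategy rests on the restricted strong monotonicity of Lemma~\ref{lem:strongmon_constant}, valid only when the comparison point lies in the consensus subspace — which is precisely why the Lyapunov function must be anchored at an equilibrium $\bar{\bs{x}}\in\bs{E}_{n}$. Arranging the primal--dual and dual--consensus cross terms to cancel cleanly enough to isolate the quadratic form of Lemma~\ref{lem:strongmon_constant} is the delicate book-keeping that makes the argument go through.
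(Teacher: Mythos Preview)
Your overall strategy coincides with the paper's: same quadratic Lyapunov function anchored at an equilibrium, same use of Lemma~\ref{lem:minoratingproj} twice (once on the running point, once on the equilibrium) so that the skew-symmetric primal--dual and $\bs{z}$--$\bs{\lambda}$ couplings cancel, and then Lemma~\ref{lem:strongmon_constant} to obtain
\[
\dot V \le -\lambda_{\textnormal{min}}(\mc{M})\|\bs{x}-\bar{\bs{x}}\|^{2}-(\bs{\lambda}-\bar{\bs{\lambda}})^\top\bs{L}_{\lambda}(\bs{\lambda}-\bar{\bs{\lambda}})\le 0,
\]
followed by a LaSalle/$\omega$-limit argument and re-centering at a limit point. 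The well-posedness and final convergence-to-a-single-equilibrium parts are also handled as in the paper.

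There is, however, a genuine gap in your invariance analysis. From $\dot V=0$ you correctly infer $\bs{x}\equiv\bar{\bs{x}}$ and $\bs{\lambda}(t)\in\bs{E}_m$ (hence $\dot{\bs{z}}=\bs{L}_\lambda\bs{\lambda}=0$), but the claim that ``plugging these into \eqref{eq:dynamics:CG} forces $\dot{\bs{\lambda}}=0$'' does not follow by mere substitution. With $\bs{z}$ constant and $\bs{L}_\lambda\bs{\lambda}=0$, the $\bs{\lambda}$-dynamics become $\dot{\bs{\lambda}}=\Pi_{\R_{\geq 0}^{Nm}}(\bs{\lambda},v)$ with the \emph{constant} vector $v:=\bs{\Lambda}\mc{R}\bar{\bs{x}}-\bs{b}-\bs{L}_\lambda\bs{z}$; nothing so far rules out $v_k>0$ for some $k$, in which case $\bs{\lambda}_k$ drifts and the point is not an equilibrium. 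The paper closes this gap in two steps: (i) since on $\{\dot V=0\}$ all the intermediate inequalities (in particular the projection inequality of Lemma~\ref{lem:minoratingproj}) are tight, one obtains $(\hat{\bs{\lambda}}-\bar{\bs{\lambda}})^\top(\bs{\Lambda}\mc{R}\bar{\bs{x}}-\bs{b}-\bs{L}_\lambda\hat{\bs{z}})=0$, which together with the KKT complementarity $\lambda^{*\top}(Ax^*-b)=0$ yields $\hat{\bs{\lambda}}^\top v=0$; (ii) if some $v_k>0$, then along any trajectory in the invariant set $\bs{\lambda}_k$ grows unboundedly, contradicting the boundedness already established from $V$. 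Together, $v\le 0$ and $\bs{\lambda}^\top v=0$ mean $v\in\nc_{\R_{\geq 0}^{Nm}}(\bs{\lambda})$, hence $\dot{\bs{\lambda}}=0$. You should insert exactly this argument; without it the LaSalle step does not identify the invariant set with the equilibrium set.
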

\begin{proof}
	See Appendix~\ref{app:th:main1}.
\end{proof}
\smallskip

\begin{remark}
In Algorithm~\ref{algo:1}, each agent keeps and exchanges an estimate of the strategies of all other agents. Thus, the computation and communication costs increase with the number of agents. An open research direction is to design dynamics that allow each agent to estimate the strategies of only some of its competitors, when the inference graph is sparse (i.e., when the cost of each agent only depends on the action of a limited subset of other agents). \hfill $\square$
\end{remark}



\section{Double-integrator agents}   \label{sec:multiintegrators} 
\noindent In this section, we make the following additional assumption.

\smallskip
\begin{assumption}[{\cite[Ass.~1]{RomanoPavel2019}}] \label{Ass:Unboundedfeasibleset}
$\Omega=\R^n.$
	{\hfill $\square$} \end{assumption}
\smallskip
Moreover, we model each agent as a double integrator:
\begin{subequations}
	\label{eq:doubleint}
	\begin{empheq}[left={\forall i\in \mc{I}:\qquad\empheqlbrace }]{align}
	\dot{x}_{i}=&v_{i}
	\\
	\dot{v}_{i}=&u_{i}, \qquad \qquad 
	\end{empheq}
\end{subequations}
where $(x_i, v_i)$ is the state of agent $i$, and $u_i\in \R^{n_i}$ its control input. Our objective is to drive the agents' actions, i.e., the $x_i$ coordinates of their state, to a \gls{v-GNE} of the game in \eqref{eq:game}. We emphasize that in \eqref{eq:doubleint} we cannot directly control the agent's action. Moreover, at steady state, the velocities $v_i$ of all the agents must be zero. This scenario has been considered recently in \cite{RomanoPavel2019}, for games \emph{without} coupling constraints. 

In \eqref{eq:doubleint}, we consider the input $u_i=\frac{1}{h_i} (\tilde{u}_i -v_i)$, where $h_i>0$ is a positive scalar and $\tilde{u}_i$ has to be chosen appropriately, for all $i\in\mc{I}$; moreover, as in \cite{RomanoPavel2019}, let us define the coordinates transformation
\begin{align}\label{eq:defzeta}
\zeta_i:=x_i+h_i v_i.
\end{align}
%
%
The quantity $\zeta_i$ can be interpreted as a prediction of the position of agent $i$, given a forward step $h_i$. The closed-loop system in the new coordinates then  reads as
\begin{subequations}
	\label{eq:dynamicsdoubletranslated}
	\begin{empheq}[left={\forall i\in \mc{I}:\qquad\empheqlbrace }]{align}
	\label{eq:dynamics40}
	\dot{v}_{i}=&\textstyle \frac{1}{h_i}(\tilde{u}_i-v_i) \qquad
	\\
	\label{eq:dynamics41}
	\dot{\zeta}_{i}=&\tilde{u}_{i}.
	\end{empheq}
\end{subequations}
We note that the dynamics of the variable $\zeta_i$ in \eqref{eq:dynamics41}, under Assumption~\ref{Ass:Unboundedfeasibleset}, are identical to the single-integrator in  \eqref{eq:integrators}, with translated input $\tilde{u}_i$. As such, we are are able to design the input $\tilde{u}_i$, according 
to Algorithm~\ref{algo:1}, to drive $\zeta:=\operatorname{col}((\zeta_i)_{i\in\mc{I}})$ to an equilibrium $\bar{\zeta}=x^*$, where $x^*$ is the \gls{v-GNE} for the game in \eqref{eq:game}.
Moreover, the velocity dynamics \eqref{eq:dynamics40} are \gls{ISS} with respect to the input $u_i$ \cite[Lemma~4.6]{Khalil}.
Finally, we remark 
that, at any equilibrium of \eqref{eq:doubleint}, $v_i=\0_{n_i}$, hence $\zeta_i=x_i$, for all $i \in \mc I$.
Building on this considerations, we propose Algorithm~\ref{algo:di-constantgain} to drive the double-integrator agents \eqref{eq:doubleint} towards a \gls{v-GNE}.

 Differently from Algorithm~\ref{algo:1}, the agents are not keeping an estimate of other agents' actions, but of other agents \emph{predictions}.
Here, $\bs{ \zeta}^i=\operatorname{col}((\bs{\zeta}_{j}^{i})_{j \in \mc I})$, and $\bs{\zeta}_{j}^{i}$ represents agent $i$'s estimation of the quantity $\zeta_j=x_j+h_jv_j$ for $j\neq i$, while $\bs{\zeta}_i^i:=x_i+h_iv_i=\zeta_i$. Let us denote $\bs{\zeta}=\col((\bs{\zeta}^i)_{i\in\mc{I}})$, $\bs{\zeta}^j_{-i}=\col((\bs{\zeta} ^j_\ell)_{\ell\in\mc{I}\backslash\{i\}})$, $H=\diag((h_i I_{n_i})_{i\in\mc{I}})$. \vspace{0.8em}
\begin{algorithm}[b] \caption{Distributed GNE  seeking (double-integrators)}\label{algo:di-constantgain}
	\vspace{-0.4em}
	\begin{align*}
	\begin{aligned}
	\dot{x}_{i}=&v_{i}, \qquad 
	\dot{v}_{i}=u_{i}
	\\
	u_i=&\textstyle -\frac{1}{h_i}\bigl(\nabla_{\! x_i} J_{i}(\bs{\zeta}^{i}_{i}, \bs{\zeta}^{i}_{-i})+A_{i}^{\top} \lambda_{i}
	\\ &
	\qquad \quad +\textstyle c\sum _{j \in \mathcal{N}_{i}} w_{i j}(\bs{\zeta}^i_i-\bs{\zeta}^j_i)  \bigr)
	-\frac{1}{h_i}v_i 
	\\
	\dot{\bs{ \zeta}}^i_{-i}=&- \textstyle\sum _{j \in \mathcal{N}_{i}} w_{i j}( \bs{\zeta}^i_{-i}-\bs{\zeta}^j_{-i} ), \quad \bs{\zeta}^{i}_i=x_i+h_iv_i
	\\
	{\dot z}_i=&\textstyle \sum_{j \in \mathcal{N}_{i}} w_{i j}(\lambda_{i}-\lambda_{j}) 
	\\
	\dot\lambda_i=&\Pi_{\R_{\geq 0}^m}\bigl(\lambda_i, A_{i} \bs{\zeta}^{i}_{i}-b_{i}-
	\underset{{j \in \mathcal{N}_{i}}}{\textstyle\sum} w_{i j} (z_{i}-z_{j}+\lambda_i-\lambda_j)\bigr)
	\end{aligned}
	\end{align*}
	\vspace{-0.6em}
\end{algorithm}

In compact form, the closed-loop system reads as
\begin{subequations}
	\label{eq:dynamics:CG:double}
	\begin{align}\label{eq:dynamics:CG:double:a} 
	\dot x=& v 
	\\
	\label{eq:dynamics:CG:double:b} 
	H\dot v=&-(\bs{F} (\bs{\zeta})+\boldsymbol{\Lambda}^\top \boldsymbol{\lambda}+ c\mathcal{R}\bs{L}_{\textnormal{x}} \bs{\zeta}) -v 
	\\
	\label{eq:dynamics:CG:double:c}
	\mathcal{S}\dot { \bs{\zeta}}=&
	-c\mathcal{S}\bs{L}_{\textnormal{x}} \bs{\zeta}, \quad  \mathcal{R} { \bs{\zeta}}=x+Hv
	\\ 
	\label{eq:dynamics:CG:double:d}
	\dot{\boldsymbol{z}}=& 
	\bs{L}_{\lambda} \boldsymbol{\lambda} 
	\\
	\label{eq:dynamics:CG:double:e}
	\dot{\boldsymbol{\lambda}}= & \Pi_{\R_{\geq 0}^{N m}}\left(\boldsymbol{\lambda},\left(\boldsymbol{\Lambda}\mathcal R \bs{\zeta}-\boldsymbol{b}\right.\right.  {-\bs{L}_{\lambda} \boldsymbol{\lambda}-\bs{L}_{\lambda} \boldsymbol{z} ) )}.
	\end{align}
\end{subequations}
\begin{theorem}\label{th:doubleint}
  Let Assumption~\ref{Ass:Unboundedfeasibleset} hold. For any initial condition with $\bs{\lambda}(0)\in\R^{mN}_{\geq 0}$, the equations in \eqref{eq:dynamics:CG:double} have a unique Carathéodory solution, such that $\bs \lambda(t)\in \R^{Nm}_{\geq 0}$, for all $t\geq 0$.  The solution converges to an equilibrium $\operatorname{col}\left( \bar{x}, \bar{v}, \bar{\bs{\zeta}},\bar{\bs z},\bar{\bs \lambda}\right) $, with  $\bar x=x^*, \bar{v}=\0_{n}$, $\bar{\bs{\zeta}}=\1_{N} \otimes x^{*}$, $\bar{\bs \lambda}=\1_{N} \otimes \lambda^{*}$, where the pair $(x^*,\lambda^*)$ satisfies the KKT conditions in \eqref{eq:KKT}, so $x^*$ is a \gls{v-GNE} for the game in \eqref{eq:game}.
	{\hfill $\square$}
\end{theorem}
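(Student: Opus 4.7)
My plan is to reduce Theorem~\ref{th:doubleint} to Theorem~\ref{th:main1} via the prediction coordinate $\mc R \bs{\zeta} = x + Hv$. Under Assumption~\ref{Ass:Unboundedfeasibleset}, $\bs{\Omega} = \R^{Nn}$, so every tangent-cone projection on $\Omega$ is the identity and the closed loop \eqref{eq:dynamics:CG} becomes a smooth ODE in $\bs{x}$ coupled to the projected multiplier dynamics on $\R^{mN}_{\geq 0}$. The first step is to show that the subsystem on $(\bs{\zeta}, \bs{z}, \bs{\lambda})$ inside \eqref{eq:dynamics:CG:double} reproduces exactly \eqref{eq:dynamics:CG}. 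Differentiating the algebraic relation $\mc R \bs{\zeta} = x + Hv$ and substituting \eqref{eq:dynamics:CG:double:a}--\eqref{eq:dynamics:CG:double:b} cancels the $v$ terms and yields $\mc R \dot{\bs{\zeta}} = -(\bs{F}(\bs{\zeta}) + \bs{\Lambda}^\top \bs{\lambda} + c\mc R \bs{L}_{\textnormal{x}} \bs{\zeta})$. Combined with \eqref{eq:dynamics:CG:double:c}--\eqref{eq:dynamics:CG:double:e}, this matches \eqref{eq:dynamics:CG} after renaming $\bs{x} \leftrightarrow \bs{\zeta}$; crucially, the resulting $(\bs{\zeta}, \bs{z}, \bs{\lambda})$ flow is autonomous, independent of $(x, v)$.

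Since $\bs{\Omega} = \R^{Nn}$, any initial condition of the prediction subsystem lies in $\Xi$, so Theorem~\ref{th:main1} yields existence and uniqueness of a Carathéodory solution, invariance of $\R^{mN}_{\geq 0}$ for $\bs{\lambda}$, and convergence to $(\bar{\bs{\zeta}}, \bar{\bs{z}}, \bar{\bs{\lambda}})$ with $\bar{\bs{\zeta}} = \1_N \otimes x^*$, $\bar{\bs{\lambda}} = \1_N \otimes \lambda^*$, and $(x^*, \lambda^*)$ satisfying \eqref{eq:KKT}. Existence and uniqueness of a full-system Carathéodory solution then follow because \eqref{eq:dynamics:CG:double:a}--\eqref{eq:dynamics:CG:double:b} is a linear ODE driven by the continuous signals $(\bs{\zeta}, \bs{\lambda})$.

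To extract $x(t) \to x^*$ and $v(t) \to \0$, I rewrite \eqref{eq:dynamics:CG:double:b} as $H \dot{v} = -v + r(t)$, with $r(t) := -(\bs{F}(\bs{\zeta}(t)) + \bs{\Lambda}^\top \bs{\lambda}(t) + c \mc R \bs{L}_{\textnormal{x}} \bs{\zeta}(t))$. At the equilibrium just identified, $\bs{L}_{\textnormal{x}} \bar{\bs{\zeta}} = 0$, $\bs{F}(\bar{\bs{\zeta}}) = F(x^*)$, and $\bs{\Lambda}^\top \bar{\bs{\lambda}} = A^\top \lambda^*$; the first line of \eqref{eq:KKT} together with $\nc_{\Omega}(x^*) = \{\0\}$ (a consequence of Assumption~\ref{Ass:Unboundedfeasibleset}) forces $F(x^*) + A^\top \lambda^* = \0$, so $r(t) \to \0$. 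The diagonal, Hurwitz $v$-subsystem is therefore ISS with vanishing input (\cite[Lemma~4.6]{Khalil}), giving $v(t) \to \0$, and finally $x(t) = \mc R \bs{\zeta}(t) - H v(t) \to x^*$.

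I do not foresee a deep obstacle: the whole argument hinges on the algebraic cancellation that decouples the prediction dynamics from $(x,v)$, and on verifying that the equilibrium input to the velocity loop vanishes, which is precisely the primal KKT condition in the unconstrained action-set setting. The only technicality worth double-checking is that the $v$-subsystem admits the stated ISS-type estimate under a continuous, asymptotically vanishing perturbation $r(t)$, which is standard for linear stable systems.
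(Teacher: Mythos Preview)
Your proposal is correct and follows essentially the same route as the paper's proof: apply the change of coordinates $x\mapsto \mathcal{R}\bs{\zeta}=x+Hv$, observe that under Assumption~\ref{Ass:Unboundedfeasibleset} the $(\bs{\zeta},\bs{z},\bs{\lambda})$ subsystem coincides with \eqref{eq:dynamics:CG} and is decoupled from $(x,v)$, invoke Theorem~\ref{th:main1}, and then use an ISS argument on the Hurwitz $v$-dynamics with vanishing input to recover $v\to\0$ and $x\to x^*$. Your explicit verification that $r(t)\to\0$ via $\nc_{\Omega}(x^*)=\{\0\}$ and the first KKT line is exactly the step the paper handles by the phrase ``by the KKT conditions in \eqref{eq:KKT} and by continuity''.
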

\begin{proof}
See Appendix~\ref{app:th:doubleint}.
\end{proof}

\smallskip
Algorithm~\ref{algo:di-constantgain} is derived by choosing $\tilde{u}_i$ in \eqref{eq:dynamicsdoubletranslated} according to Algorithm~\ref{algo:1}. The proof of Theorem~\ref{th:doubleint} is not based on the specific structure of Algorithm~\ref{algo:1}, but only on its convergence properties, hence the result still holds if another controller with similar features is selected in place of Algorithm~\ref{algo:1}.
%
In \cite{RomanoPavel2019}, the authors addressed \gls{NE} problems  and chose the inputs $\tilde{u}_i$ according to the algorithm presented in \cite[Eq.~47]{GadjovPavel2018}. The controller in \cite{GadjovPavel2018}  achieves exponential convergence to a \gls{NE}, hence \gls{ISS} with respect to possible additive disturbances \cite[Lemma~4.6]{Khalil}. Therefore, in \cite{RomanoPavel2019}, the authors were able
to tackle the presence of deterministic disturbances, via an asymptotic observer and by leveraging \gls{ISS} arguments.
We have not guaranteed this robustness, i.e., exponential convergence,  for the primal-dual dynamics in \eqref{eq:dynamics:CG}.
However, the controller in \cite{RomanoPavel2019} is designed for games without any constraints (local or shared). 
On the contrary, the controller in Algorithm~\ref{algo:di-constantgain} drives the system in \eqref{eq:doubleint} to a \gls{v-GNE} of a generalized game, and ensures for the coupling constraints to be satisfied asymptotically. 
Also, like in \cite{RomanoPavel2019},  we assumed
the absence of local constraints  (Assumption~\ref{Ass:Unboundedfeasibleset}).
Nevertheless, if some are present, they can be included in the coupling constraints, hence dualized and satisfied asymptotically.

\section{Numerical example: mobile sensor network}\label{sec:simulations}

We consider a numerical example, inspired by connectivity control problems for  sensor networks \cite{Stankovic_Johansson_Stipanovic_2012}, \cite{RomanoPavel2019}. Each of five agent is represented by a robot/vehicle, moving in a plane, designed to optimize some private primary objective related to its position, provided that overall connectivity is preserved over the network.  For each agent $i\in\mc{I}=\{1,\dots,5\}$, its cost function is
$ \textstyle J_{i}\left(p_{i}, p_{-i}\right):=p_{i}^{\top} p_{i}+p_{i}^{\top} r_{i}+\sum_{j \in \mathcal{I}}\left\|p_{i}-p_{j}\right\|^{2},$ 
with $p_i=\operatorname{col}(x_i,y_i)$ its cartesian coordinates, $r_i\in\R^2$ a local parameter.
We assume the local constraints $0.1\leq y_i \leq 0.5$, $\forall i\in\mc{I}$. In order for all the agents to maintain communication with their neighbors, we impose the Chebyschev distance between any two neighboring robots to be smaller than $0.2 \,m$.  Hence the (affine) coupling constraints are represented by $\max\{|x_i-x_j|,|y_i-y_j|\}\leq0.2, \forall (i,j)\in \mc{E}$. As common for autonomous vehicles, we model the agents as single- or double-integrators.\newline \indent
\emph{Velocity-actuated  robots}: each agent is modeled as in \eqref{eq:integrators} and we apply Algorithm~\ref{algo:1}.
\newline \indent
 \emph{ Force-actuated robots }: Each agent has a dynamic as in \eqref{eq:doubleint}, under Algorithm~\ref{algo:di-constantgain}. The local constraints are dualized and will be satisfied asymptotically  (see Section~\ref{sec:multiintegrators}).
\newline\indent
The initial conditions are chosen randomly and we fix $c=30$ to satisfy the condition in Theorem~\ref{th:main1}.
Figure~\ref{fig:2} illustrates the results for the two cases and shows convergence the \gls{GNE} of the game and asymptotic satisfaction of the coupling constraints.
Finally, in Figure~\ref{fig:3}, we compare the trajectories of the five robots in the velocity-actuated and force-actuated scenario. In the two cases, the agents are converging to the same, unique \gls{v-GNE}. However, the local constraints are satisfied along the whole trajectory for single integrator agents, only asymptotically for the double integrator agents.
\setlength{\textfloatsep}{\textfloatsepsave-0.5em}
\begin{figure}[t]
	\centering
	\includegraphics[width=0.9\columnwidth]{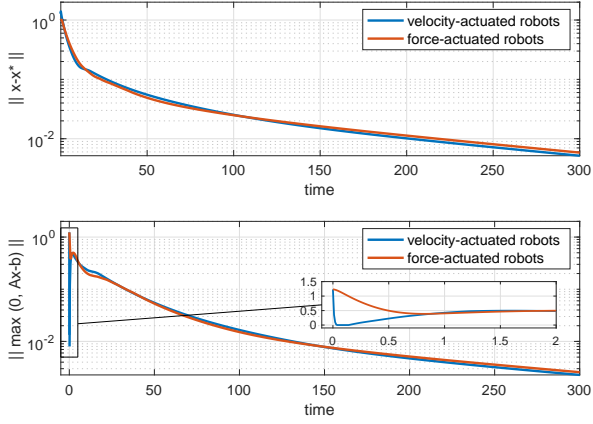}
	\caption{Results of Alg.~\ref{algo:1} for single- and double-integrator agents.}
	\label{fig:2}
\end{figure}
\begin{figure}[t]
	\centering
	\includegraphics[width=\columnwidth]{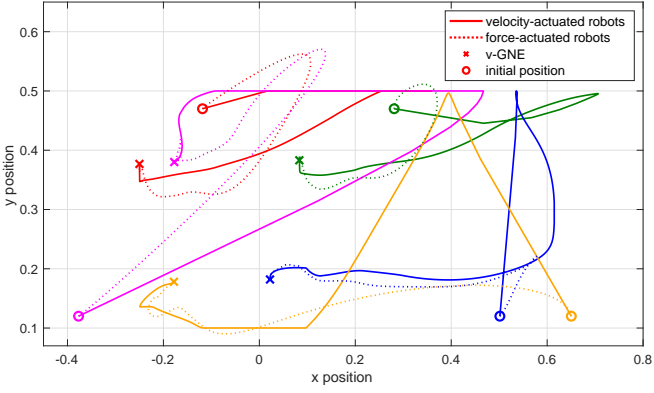}
	\caption{Velocity-actuated and force-actuated robots trajectories.}
	\label{fig:3}
\end{figure}
\section{Conclusion and outlook}\label{sec:conclusion}
Generalized games played by double-integrator agents can be solved via a fully distributed primal-dual projected-pseudogradient dynamic controller, if the game mapping is strongly monotone and Lipschitz continuous. Seeking an equilibrium in games with compact action sets or constrained dynamics is currently an unexplored problem. The extension of our results to networks of heterogeneous dynamical systems is left as future research.
\section*{Appendix}
 \subsection{Proof of Lemma~\ref{lem:LipschitzExtPseudo}}\label{app:lem:LipschitzExtPseudo}
 \noindent 
Let us define $\bs{x}=\col( (\bs{x}^i )_{i\in \mc{I}})$,
$\bs{y}=\col((\bs{y}^{i})_{i\in \mc{I}})$.
By Standing Assumption~\ref{Ass:StrMon}, we have, for all $i\in\mc{I}$, 
$
\| \nabla_{\! x_i} J_{i}(\bs{x}^i)-\nabla_{\!x_i}J_{i}(\bs{y}^{i})\| \leq \| F(\bs{x}^i)-F(\bs y^i)\|\leq  \theta_{0} \| \bs{x}^i-\bs{y}^{i} \|.
$
Therefore it holds that
\begin{align*}
\|\bs{F}(\bs{x})-\bs{F}(\bs{y})\|^2&=\textstyle \sum_{i\in\mc{I}} \| \nabla_{\! x_i} J_{i}(\bs{x}^i)-\nabla_{\! x_i}J_{i}(\bs{y}^{i})\|^2
\\ &
\leq \theta_{0}^2  \textstyle\sum_{i\in\mc{I}}  \| \bs{x}^i-\bs{y}^{i} \|^2=\theta_{0}^2 \|\bs{x}-\bs{y}\|^2.
\end{align*}
That $\theta\geq \mu$ follows by choosing $\mc{S}\bs{x}=\mc{S}\bs{y}$, $\bs{x}\neq\bs{y}$.
\hfill  \QED
\subsection{Proof of Theorem~\ref{th:main1}}\label{app:th:main1}
Under Standing Assumption~\ref{Ass:Graph}, we have, for any $q>0$,
\begin{align}\label{eq:null_Lq}
\operatorname{N u l l}\left(L\otimes I_q\right)= \operatorname{Range}\left(\1_{N} \otimes I_{q}\right)=\bs{E}_{q}.
\end{align}
We first rewrite the dynamics as
\begin{align}
\label{eq:compact_constant}
\dot{\boldsymbol{\omega}}=\Pi_{\Xi} ({\boldsymbol{\omega}},-\mathcal{B}(\bs \omega)-\Phi \bs{\omega}),
\end{align}
where  ${\boldsymbol{\omega}}=\operatorname{col}\left( {\bs{x}},{\bs z},{\bs \lambda}\right) $,
\begin{align*}
\Phi= \left[ 
\begin{smallmatrix}
{0} & {0} & {\mathcal{R}^\top \boldsymbol{\Lambda}^\top} \\ {0} & {0} & {-\bs{L}_{\lambda}} \\ {-\boldsymbol{\Lambda} \mathcal{R}} & {\bs{L}_{\lambda}} & {0}
\end{smallmatrix}\right], \quad
\mathcal{B}(\bs \omega)=
\left[
\begin{smallmatrix}
\mathcal{R}^\top \bs F(\bs{x})+c\bs{L}_{\textnormal{x}} \bs{x}\\ 
\bs 0_{Nm}\\
\bs{L}_\lambda \bs \lambda +\bs b
\end{smallmatrix}\right].
\end{align*}
$\mathcal{B}$ is Lipschitz continuous by Standing Assumption~\ref{Ass:StrMon} and $\Xi$ is closed and convex by Standing Assumption~\ref{Ass:Convexity}. We conclude that that there exists a unique Carathéodory solution to \eqref{eq:compact_constant}, that belongs to $\Xi$ for every $t\geq0$,
Consider the quadratic Lyapunov function 
\begin{align*}
V(\bs{\omega})&=\textstyle\frac{1}{2}\|\bs{\omega}-\bar{\bs\omega} \|^2,
\end{align*}
where $\bar{\bs{\omega}}:=\col(\bar{\bs{x}},\bar{\bs{z}}, \bar{\bs{\lambda}} ) $ is any equilibrium of  \eqref{eq:dynamics:CG}. We remark that, by Lemma~\ref{lem:equilibria-vGNE}, an equilibrium exists, and  $\bar{\bs{x}}=\1_{N} \otimes x^{*}$, $\bar{\bs \lambda}=\1_{N} \otimes \lambda^{*}$, with  $(x^*,\lambda^*)$ satisfying the KKT conditions in \eqref{eq:KKT}. We can apply Lemma~\ref{lem:minoratingproj} to obtain
\begin{align}
\nonumber
\dot V(\bs \omega ):=&\nabla V(\boldsymbol{\omega}) \dot{\boldsymbol{\omega}}= 
\nonumber
 (\boldsymbol{\omega}-\bar{\boldsymbol{\omega}})^{\top} \Pi_{\Xi} ({\boldsymbol{\omega}},-\mathcal{B}(\bs \omega)-\Phi \bs{\omega})
\\
\label{eq:usefulstep}
\leq  & (\boldsymbol{\omega}-\bar{\boldsymbol{\omega}})^{\top} (-\mathcal{B}(\bs \omega)-\Phi \bs{\omega}).
\end{align}
By Lemma~\ref{lem:minoratingproj}, it also holds that
$
(\boldsymbol{\omega}-\bar{\boldsymbol{\omega}})^{\top} (-\mathcal{B}(\bar{\bs \omega})-\Phi \bar{\bs{\omega}})\leq 0.
$
By subtracting this term from \eqref{eq:usefulstep}, we obtain
\begin{align}
\nonumber
\dot V&\leq  -(\bs \omega - \bs{\bar \omega })^\top \left( \mathcal{B}(\bs \omega ) -\mathcal{B}(\bs{ \bar  \omega} ) +\Phi(\bs \omega -\bs{\bar \omega })\right)
\\
\label{eq:usefulstep2}
&  \begin{aligned}
=&-(\bs{x}-\bar{\bs{x}})^{\top} \mathcal{R}^{\top}
\left( \bs{F}(\bs{x})-\bs{F}(\boldsymbol{\bar x}) \right) 
\\
&-(\bs{x}-\bar{\bs{x}})^{\top} c\bs{L}_{\textnormal{x}} (\bs{x}-\bar{\bs{x}})
 -(\bs \lambda -\bar{\bs \lambda})^\top \bs{L}_{\lambda} (\bs \lambda -\bar{\bs \lambda}),
\end{aligned}
\end{align}
where, in the last equality, we used that, $\Phi^\top=-\Phi$.  
By \eqref{eq:null_Lq} and \cite[Cor.~18.16]{Bauschke2017}, we have that
$(\bs \lambda -\bar{\bs \lambda})^\top \bs{L}_{\lambda} (\bs \lambda -\bar{\bs \lambda})\geq \frac{1}{2\lambda_{\textnormal{max}}(L)} \| \bs{L}_{\lambda} \boldsymbol{\lambda} \|^2 $.
Finally, by Lemma~\ref{lem:strongmon_constant}, we obtain
\begin{align}
\label{eq:upperbound_constant}
\dot V\leq
- \lambda_{\textnormal{min}}(\mc{M}) \|  \bs{x}-{\bar {\bs{x}}\|^2-\textstyle \frac{1}{2\lambda_{\textnormal{max}}(L)}\| \bs{L}_\lambda \bs \lambda \|^2 }\leq 0,
\end{align}
with $\mc{M}\succ 0$ as in \eqref{eq:M1}.
By noticing that $V$ is radially unbounded, we conclude that the solution to \eqref{eq:dynamics:CG} is bounded. Besides,  by  \cite[Th.~2]{DePersisGrammatico2018}, the solution converges to the largest invariant set $\mc{O}$ contained in $\mc{Z}:=\{\bs \omega \text{ s.t. }\dot V(\bs \omega)=0\}$. \newline \indent
We first characterize any point $ \operatorname{col}( \hat{\bs{x}},\hat{\bs z},\hat{\bs \lambda}) \in \mc{Z} $, for which the quantities in \eqref{eq:usefulstep}-\eqref{eq:upperbound_constant} must be zero. By \eqref{eq:upperbound_constant}, $\hat{\bs{x}}=\bar{\bs{x}}=\1_N\otimes x^*,$ and $\hat{\bs \lambda}\in \bs{E}_{m}$, i.e.  $\hat{\bs \lambda}=\1_N \otimes \hat\lambda$, for some  $\hat\lambda \in \R^m_{\geq 0}$. Also, by expanding \eqref{eq:usefulstep}, and by  $\hat{\bs{x}}=\bar{\bs{x}}$, $\bs{L}_\lambda \bs{\hat\lambda}=0$, we have
\begin{align}
\nonumber 
0&=(\bs{\hat\lambda}-\bs{\bar \lambda})^\top (\boldsymbol{\Lambda}\mathcal R\bar{\bs{x}}-\boldsymbol{b} {-\bs{L}_{\lambda} \boldsymbol{\hat z} } )
=(\hat\lambda-\lambda^*)^\top (Ax^*-b) 
\\
\label{eq:usefulstep3}
&=\hat\lambda^\top (Ax^*-b)
=\bs{\hat\lambda}^\top (\boldsymbol{\Lambda}\mathcal R\bar{\bs{x}}-\boldsymbol{b} {-\bs{L}_{\lambda} \boldsymbol{\hat \lambda}-\bs{L}_{\lambda} \boldsymbol{\hat z} } ),
\end{align}
where in the second equality we have used \eqref{eq:null_Lq} and the fourth equality follows from the  KKT conditions in \eqref{eq:KKT}. This concludes the characterization of the set $\mc Z$. 
\newline \indent
By invariance, any trajectory $\underline{\bs{\omega}}(t)=\operatorname{col}( \underline{\bs{x}}(t),\underline{\bs z}(t),\underline{\bs \lambda}(t))$ starting at any $\operatorname{col}( \underline{\bs{x}}, \underline{\bs z},\underline{\bs \lambda}) \in \mc{O}$ must lie in $\mc{Z} \supset \mc{O}$ for all $t\geq 0$. Therefore, $\underline{\bs{x}}(t)\equiv\bar{\bs{x}}$ and $\underline{\bs \lambda}(t)\in \bs{E}_{m}$ for all $t$. Moreover, $\dot{\underline{\bs z}}(t)=0$, for all $t$, by \eqref{eq:dynamics:CG:b}, or ${\underline{\bs z}}(t)\equiv{\underline{\bs z}}$. Hence the quantity $v:= \left(\boldsymbol{\Lambda}\mathcal R\underline{\bs{x}}(t)-\boldsymbol{b} {-\bs{L}_{\lambda} \underline{\boldsymbol{\lambda}}(t)-\bs{L}_{\lambda} \underline{\boldsymbol{z}}(t) } \right)$ is constant along the trajectory $\underline{\bs{\omega}}$.
Suppose by contradiction that $v_k>0$, where $v_k$ denotes the $k$-th component of $v$. Then, by \eqref{eq:dynamics:CG:c}, $\dot{\underline{\bs \lambda}}(t)_k=v_k$ for all $t$, and ${\underline{\bs \lambda}}(t)$ grows indefinitely. Since all the solutions of \eqref{eq:dynamics:CG} are bounded, this is a contradiction. Therefore, $v\leq0$, and $\underline{\bs \lambda}(t)^\top v=0$ by \eqref{eq:usefulstep3}. Equivalently, $\textstyle v\in \nc_{\R_{\geq 0}^{N m}}(\underline{\bs \lambda}(t))$, hence $\dot{\underline{\bs \lambda}}(t)=0$, for all $t$. We conclude that all the points in the set $\mc{O}$ are equilibria.

%
%
%
%

The set $\Lambda (\bs{\omega}_0)$ of $\omega$-limit points\footnote{$z :[0, \infty) \rightarrow \mathbb{R}^{n}$ has an $\omega$-limit point at $\bar{z}$ if there exists a nonnegative diverging sequence $\{t_k\}_{k\in\N}$  such that $ z\left(t_{k}\right) \rightarrow \bar{z}$.}of the solution to \eqref{eq:dynamics:CG} starting from any $\bs{\omega}_0\in\Xi$ is nonempty (by Bolzano-Weierstrass theorem, since all the trajectories of \eqref{eq:dynamics:CG} are bounded) and  invariant (as in proof of \cite[Lemma~5]{DePersisGrammatico2018}). By $\dot{V}\leq 0$  it follows that $V$ must be constant on $\Lambda(\bs{\omega}_0)$, hence $\Lambda(\bs{\omega}_0)\subseteq \mc{Z}$ (see proof of \cite[Th.2]{DePersisGrammatico2018}). Also $\Lambda(\bs{\omega}_0)$ is invariant, so $\Lambda(\bs{\omega}_0)\subseteq \mc{O}$. 
Since the distance to any equilibrium point along any trajectory of \eqref{eq:dynamics:CG} is non-increasing by \eqref{eq:upperbound_constant}, it follows that if a solution of  \eqref{eq:dynamics:CG}  has an $\omega$-limit point at an equilibrium, then the solution converges to that equilibrium. 
\hfill \QED

\subsection{Proof of Theorem~\ref{th:doubleint}}\label{app:th:doubleint}
	By applying the coordinate transformation $x\mapsto \mathcal{R} \bs{\zeta}=x+Hv$ to the system in \eqref{eq:dynamics:CG:double}, we obtain:
\begin{subequations}
	\label{eq:di_translated_compact} 
	\begin{align}
	\label{eq:di_translated_compact:a}
	 \dot v=&-H^{-1}(\bs{F} (\bs{\zeta})+\boldsymbol{\Lambda}^\top \boldsymbol{\lambda}+ c\mathcal{R}\bs{L}_{\textnormal{x}} \bs{\zeta}) -H^{-1}v 
	\\
	\dot { \bs{\zeta}}=&
	-\mathcal{R}^\top (\bs{F} (\bs{\zeta})+\boldsymbol{\Lambda}^\top \boldsymbol{\lambda}+ c\mathcal{R}\bs{L}_{\textnormal{x}} \bs{\zeta})
	\label{eq:di_translated_compact:b}
	 -c\mathcal{S}^\top\mathcal{S}\bs{L}_{\textnormal{x}} \bs{\zeta}  
	\\
	\label{eq:di_translated_compact:c}
	\dot{\boldsymbol{z}}=& 
	\bs{L}_{\lambda} \boldsymbol{\lambda} 
	\\
	\label{eq:di_translated_compact:d}
	\dot{\boldsymbol{\lambda}}= & \Pi_{\R_{\geq 0}^{N m}}\left(\boldsymbol{\lambda},\left(\boldsymbol{\Lambda}\mathcal R \bs{\zeta}-\boldsymbol{b}\right.\right.  {-\bs{L}_{\lambda} \boldsymbol{\lambda}-\bs{L}_{\lambda} \boldsymbol{z} ) )}.
	\end{align}
\end{subequations}
The system \eqref{eq:di_translated_compact} is in cascade form for \eqref{eq:di_translated_compact:a} with respect to \eqref{eq:di_translated_compact:b}-\eqref{eq:dynamics:CG:double:d}. 
Notice also that, under Assumption~\ref{Ass:Unboundedfeasibleset}, the subsystem \eqref{eq:di_translated_compact:b}-\eqref{eq:dynamics:CG:double:d} is exactly \eqref{eq:dynamics:CG}. Hence, there exists a unique solution to \eqref{eq:di_translated_compact:b}-\eqref{eq:dynamics:CG:double:d}, that is bounded and converges to an equilibrium point $\operatorname{col}\left(\1_{N} \otimes x^{*},\bar{\bs z},\1_{N} \otimes \lambda^{*}\right) $, where the pair $(x^*,\lambda^*)$ satisfies the KKT conditions in \eqref{eq:KKT}, by Theorem~\ref{th:main1}.  On the other hand, the dynamic \eqref{eq:di_translated_compact:a} is \gls{ISS} with respect to the input $\tilde u:=-H^{-1}(\bs{F} (\bs{\zeta})+\boldsymbol{\Lambda}^\top \boldsymbol{\lambda}+ c\mathcal{R}\bs{L}_{\textnormal{x}} \bs{\zeta})$ \cite[Lemma $4.6$]{Khalil}, and this input is bounded, by boundedness of the trajectory $\left( \bs \zeta,\bs{k}, \bs z, \bs \lambda\right)$ and Lemma~\ref{lem:LipschitzExtPseudo}. 
Moreover, since $ \bar{\bs{\zeta}}=\1_{N} \otimes x^{*} $, $\bar{\bs \lambda}=\1_{N} \otimes \lambda^{*}$, by the KKT conditions in \eqref{eq:KKT} and by continuity, we have $\tilde u\rightarrow \0_{n}$ for $t\rightarrow \infty$.
Therefore, $v(t)\rightarrow \0_{n}$ for $t\rightarrow \infty$  \cite[Ex.~4.58]{Khalil}. By definition of $\zeta_i=\mc{R}_i\bs{\zeta}^i$ in \eqref{eq:defzeta}, we can also conclude that $x\rightarrow x^*$.  \hfill  \QED





\bibliographystyle{IEEEtran}
\bibliography{library}
\end{document}